\newtheorem{thm}{Theorem}[section]
\newtheorem{cor}[thm]{Corollary}
\newtheorem{lem}[thm]{Lemma}
\theoremstyle{definition}
\newtheorem{ex}[thm]{Example}
\numberwithin{equation}{section}
\newcommand{\cP}{\mathcal{P}}
\newcommand{\QQ}{\mathbb Q}
\newcommand{\ZZ}{\mathbb Z}
\newcommand{\ra}{\rightarrow}
\DeclareMathOperator{\Ima}{{im}}
\DeclareMathOperator{\Irr}{{Irr}}
\DeclareMathOperator{\End}{{End}}
\DeclareMathOperator{\Ker}{{Ker}}
\def\lra{\longrightarrow}
\def\Q{{\mathbb Q}}
\begin{document}

\title[ ]{Abelian varieties with finite abelian group action}%
\author{Angel Carocca, Herbert Lange and Rub{\'\i} E. Rodr{\'\i}guez}

\address{A. Carocca\\Departamento de Matem\'atica y Estad{\'\i}stica, Universidad de la Frontera, Casilla 54-D, Temuco, Chile}
\email{angel.carocca@ufrontera.cl}

\address{H. Lange\\Department Mathematik,
              Universit\"at Erlangen \\Germany}
\email{lange@math.fau.de}

\address{R. E. Rodr{\'\i}guez\\Departamento de Matem\'atica y Estad{\'\i}stica,
Universidad de la Frontera, Casilla 54-D, Temuco, Chile}
\email{rubi.rodriguez@ufrontera.cl}

\thanks{The  authors were partially supported by grants Fondecyt
1190991, CONICYT PAI Atraccion de Capital Humano Avanzado del Extranjero  PAI80160004 and Anillo ACT 1415 PIA-CONICYT}%
\subjclass{14H40, 14K10,  }%
\keywords{Abelian variety, automorphism}%

\begin{abstract}
An automorphism of an abelian variety induces a
decomposition of the variety up to isogeny. There are two such
results, namely the isotypical decomposition and Roan's
decomposition theorem. We show that they are essentially the same. Moreover, we generalize in a sense
this result to abelian varieties with action of an arbitrary finite abelian group.
\end{abstract}

\maketitle

\section{Introduction}

Let $A$ be an abelian variety with an action of a finite group $G$.
We always understand by this a faithful action, without further
noticing it. Any such group action induces a decomposition of $A$
as product of $G$-stable abelian subvarieties up to isogeny, called
the isotypical decomposition with respect to $G$ (see \cite[Section
13.6]{lb}). It uses the group algebra $\Q[G]$ and its decomposition
into a product of simple $\Q$-algebras.

In the special case of a cyclic group $G = \langle \alpha \rangle$,
Roan found another isogeny decomposition
(see \cite[Theorem 13.2.8]{lb}), which used only the
analytic representation of $\alpha$ and thus is somewhat simpler to
work out. In the first part of the paper we show that both
decompositions essentially agree (see Theorem \ref{thm3.1}).

It would be useful to have a generalization of Roan's decomposition
for other types of groups. Already in the case of an arbitrary finite abelian group action this would be complicated.
For example it would not suffice to consider the analytic representations of a system of generators of the group, work out decompositions
for every generator and then take intersections.

However we give a method to compute the isotypical decomposition which is slightly weaker (see Theorem \ref{thm}).
In fact, for every irreducible rational representation we work out the corresponding isotypical component of the abelian variety
using intersections of fixed points of certain subgroups of $G$. So there are often a lot of redundant
computations to make, since many isotypical components may be zero. However it is a method for the computation of the decomposition,
which can be done for many groups. We give an example at the end of the paper.

In Section 4 we give some preliminaries for the proof of the theorem and in Section 5 the proof itself.

\section{The decompositions}

\subsection{The isotypical decomposition}
Let $A$ be an abelian variety and let $G$ be a finite group
of automorphisms of $A$.
The action of $G$ on $A$ induces a homomorphism of $\QQ$-
algebras
$$
\rho: \QQ[G] \ra \End_\QQ(A)
$$
of the group algebra of $G$ into the endomorphism algebra
of $A$. As a semisimple algebra, $\QQ[G]$ is a product of
simple $\QQ$-algebras $Q_i$:
$$
\QQ[G] = Q_1 \times \cdots \times Q_r.
$$
This gives a decomposition of $1 \in \Q[G]$,
$$
1 = e_1 + \cdots + e_r
$$
as a sum of central idempotents $e_i$ of $\Q[G]$. The $Q_i$ and thus the
$e_i$ correspond one to one to the irreducible rational
representations $W_i$ of the group $G$.

Now to any idempotent $e$ of $\Q[G]$ one associates an abelian subvariety
$$
A_e := \Ima (e),
$$
where $\Ima (e)$ is defined by the image of $n \rho(e)$ in $A$, where $n$ is any positive integer such that
$n \rho(e) \in \End(A)$. It does not depend on the chosen integer $n$.

For $i = 1, \dots , r$ we denote $A_{e_i}$ also by $A_{W_i}$.
Then the decomposition of 1 of above implies
that the addition map
\begin{equation} \label{e2.1}
A_{W_1} \times \cdots \times A_{W_r} \stackrel{+}{\ra} A
\end{equation}
is an isogeny. This is called the {\it isotypical
decomposition} of $A$ for the action of $G$ (see \cite[Proposition 13.6.1]{lb}). Note that $G$
acts on $A_{W_j}$ via the representation $W_j$ for each $j$ and
the addition map is $G$-equivariant.

\subsection{Roan's decomposition theorem}
Now let $G$ be a cyclic group of order $d$ acting on an
abelian variety $A$, generated by an automorphism $\alpha$.
Suppose
$$
1 \leq d_1 < d_2 < \cdots < d_s
$$
are the orders of the eigenvalues of $\alpha$, meaning the
eigenvalues of the analytic
representation $\rho_a(\alpha)$ of $\alpha$.
Define a filtration of $A$ into $G$-stable abelian
subvarieties
\begin{equation} \label{eq2.2}
0 = Y_{d_s} \subset Y_{d_{s-1}} \subset \cdots \subset Y_{d_1} \subset Y_{d_0} = A
\end{equation}
by
$$
Y_{d_0} := A \quad \mbox{and} \quad Y_{d_i} := \Ima(1_A - \alpha^{d_i})|Y_{d_{i-1}} \quad \mbox{for}\quad i \ge 1,
$$
where $|$ means restricted.

Then denote for $i=1, \dots, s$,
$$
B_{d_i} := \ker ((1_A - \alpha^{d_i})|Y_{d_{i-1}})_0
$$
where the index $0$ stands for the connected component of the kernel
containing 0. In other words, $B_{d_i}$ is the connected
component $\textup{Fix}(\alpha^{d_i}|Y_{d_{i-1}})_0$ containing 0 of the
fixed group of the automorphism $\alpha^{d_i}|Y_{d_{i-1}}$
of $Y_{d_{i-1}}$.  Clearly the $B_{d_i}$ are $G$-stable abelian
subvarieties of $A$ such that
$$
\alpha_i := \alpha|B_{d_i}
$$
is an automorphism of order $d_i$ for all $i$. To be more precise, the eigenvalues of $\alpha_i$ are exactly the eigenvalues of $\alpha$ of order $d_i$.  Then Roan's
decomposition theorem says (see \cite[Theorem 13.2.8]{lb})
that the addition map
\begin{equation} \label{e2.2}
B_{d_1} \times \cdots \times B_{d_s} \stackrel{+}{\lra} A
\end{equation}
is an isogeny. Note that all subvarieties $B_{d_i}$ are of positive
dimension, whereas for the isotypical decomposition this need not be
the case.

\section{The case of a finite cyclic group}

Let $G = \langle \alpha \rangle$ be a cyclic group of order
$d$ acting on an abelian variety $A$ and let the notations be
as in the last section.
The aim of this section is to prove the following theorem.

\begin{thm} \label{thm3.1}
The decompositions \eqref{e2.1} and \eqref{e2.2} are
the same in the following sense:
\begin{enumerate}
\item[(i)] For every
$i = 1, \dots, s$ there is exactly one $j_i \in \{1, \dots , r
\}$ such that
$$
A_{W_{j_i}} = B_{d_i}
$$
and the $j_i$'s are pairwise different.
\item[(ii)]
For all components $A_{W_k}$ with $k \neq j_i$ for $i =1, \dots ,s$ we have
$$ A_{W_k} = 0.
$$
\end{enumerate}
So if one omits these components $A_{W_k}$, the isogenies \eqref{e2.1} and \eqref{e2.2} agree
up to a permutation.
\end{thm}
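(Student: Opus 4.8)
The plan is to reduce both decompositions to the common eigenvalue data of the analytic representation $\rho_a(\alpha)$ on $V := T_0A$, and to compare the two families of subvarieties through their tangent spaces. Recall that an abelian subvariety of $A = V/\Lambda$ is determined by its tangent space, viewed as a complex subspace of $V$; so it suffices to show that $A_{W_{j_i}}$ and $B_{d_i}$ have the same tangent space, and that the remaining $A_{W_k}$ have trivial tangent space.

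First I would identify the irreducible rational representations. Since $G = \langle\alpha\rangle$ is cyclic of order $d$, we have $\Q[G] \cong \Q[x]/(x^d-1) \cong \prod_{e\mid d}\Q(\zeta_e)$, so the $W_i$ correspond bijectively to the divisors $e$ of $d$, and the central idempotent $e_i$ is the unique $p_e(\alpha)$ with $p_e\equiv 1 \pmod{\Phi_e}$ and $p_e\equiv 0 \pmod{\Phi_{e'}}$ for $e'\neq e$. As $\rho_a(\alpha)$ is diagonalisable with eigenvalues roots of unity, evaluating $p_e$ at an eigenvalue $\lambda$ gives $1$ if $\lambda$ has order $e$ and $0$ otherwise. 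Hence $\rho_a(e_i)$ is the projection onto $V^{(e)} := \bigoplus_{\mathrm{ord}(\lambda)=e}V_\lambda$, and therefore $T_0A_{W_i} = \Image\,\rho_a(e_i) = V^{(e)}$ (the scalar clearing the denominators of $e_i$ does not affect the image). In particular $A_{W_i}=0$ precisely when $\alpha$ has no eigenvalue of order $e$, i.e. when $e\notin\{d_1,\dots,d_s\}$; this already yields part (ii).

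Next I would compute the tangent spaces in Roan's filtration by induction, using that the tangent space of the image (resp. of the connected component of the kernel) of an endomorphism is the image (resp. kernel) of its analytic representation. Writing $V=\bigoplus_{i=1}^s V^{(d_i)}$, I claim $T_0Y_{d_{i-1}}=\bigoplus_{k\ge i}V^{(d_k)}$ and $T_0B_{d_i}=V^{(d_i)}$. The point is the ordering $d_1<\dots<d_s$: on $V^{(d_i)}$ every eigenvalue $\lambda$ satisfies $\lambda^{d_i}=1$, so $1-\alpha^{d_i}$ vanishes there, while for $k>i$ one has $d_k\nmid d_i$ (as $d_k>d_i$), so $\lambda^{d_i}\neq1$ and $1-\alpha^{d_i}$ is invertible on $V^{(d_k)}$. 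Thus $1-\alpha^{d_i}$ restricted to $T_0Y_{d_{i-1}}$ has kernel $V^{(d_i)}$ and image $\bigoplus_{k>i}V^{(d_k)}$, which advances the induction and gives $T_0B_{d_i}=V^{(d_i)}$.

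Combining the two computations, for the index $j_i$ corresponding to the divisor $e=d_i$ we get $T_0A_{W_{j_i}} = V^{(d_i)} = T_0B_{d_i}$, whence $A_{W_{j_i}}=B_{d_i}$; distinct $d_i$ give distinct $j_i$, proving (i). I expect the main point to be the passage between the rational and the analytic sides, namely the identification of $\rho_a(e_i)$ as the order-$e$ eigenprojection; once this bridge is in place both tangent spaces are visibly the same graded piece $V^{(d_i)}$ and the theorem follows formally.
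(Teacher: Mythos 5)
Your proof is correct, and it reaches the conclusion by a different mechanism than the paper. Both arguments come down to matching the two decompositions through the orders of the eigenvalues of $\rho_a(\alpha)$, but the paper concludes with a uniqueness lemma (Lemma \ref{lem3.2}): if $\alpha$ has exactly $m$ eigenvalues of order $d'$, then an $\alpha$-stable abelian subvariety of dimension $m$ all of whose eigenvalues have order $d'$ is unique, since the sum of two distinct such subvarieties would be $\alpha$-stable, of larger dimension, and still have only order-$d'$ eigenvalues. With that lemma in hand, the paper only needs to know \emph{which} eigenvalues occur on $B_{d_i}$ (quoted from the setup of Roan's theorem in \cite[Theorem 13.2.8]{lb}) and on $A_{W_j}$ (from the description of the rational irreducible representations of a cyclic group as sums of Galois-conjugate characters of a fixed order); it never touches tangent spaces. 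You instead work entirely inside $V = T_0A$: you identify $\rho_a(e_i)$ as the eigenprojection onto $V^{(e)}$ via the Chinese Remainder Theorem in $\Q[x]/(x^d-1)$, you compute Roan's filtration inductively to get $T_0Y_{d_{i-1}} = \bigoplus_{k \ge i} V^{(d_k)}$ and $T_0B_{d_i} = V^{(d_i)}$, and you conclude from the rigidity principle that an abelian subvariety of $V/\Lambda$ is determined by its tangent space. What your route buys: it is self-contained, re-proving rather than citing the facts the paper takes as given (in particular that the eigenvalues of $\alpha|B_{d_i}$ are exactly the order-$d_i$ eigenvalues of $\alpha$ with multiplicities), and part (ii) falls out immediately as $V^{(e)} = 0$ for $e \notin \{d_1,\dots,d_s\}$. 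What the paper's route buys: brevity — Lemma \ref{lem3.2} bypasses all explicit tangent-space computations once the eigenvalue data of the restrictions is known. Your inductive step is the one point worth stating carefully in a final write-up: the justification that $1-\alpha^{d_i}$ is invertible on $V^{(d_k)}$ for $k>i$ rests exactly on your observation that $d_k > d_i$ forces $d_k \nmid d_i$, and that the tangent space of the connected component of a kernel equals the kernel of the differential (true here by the dimension count $\dim\ker + \dim\mathrm{im}$ on both the variety and the tangent-space level).
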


For the proof of the theorem we need the following lemma.

\begin{lem} \label{lem3.2}
Let the notation be as above. Suppose $\alpha$ admits exactly
$m$ eigenvalues of order $d'$. If $X$ and $Y$ are
$\alpha$-stable abelian subvarieties of dimension $m$ such
that all eigenvalues of the restrictions of $\alpha$ are of
order $d'$,
then
$$
X = Y.
$$
\end{lem}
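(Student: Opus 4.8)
The plan is to pass to the analytic representation and argue entirely at the level of tangent spaces. Write $A = V/\Lambda$ with $V = T_0 A$ the complex tangent space and $\Lambda$ the period lattice, and set $F := \rho_a(\alpha) \in \mathrm{GL}(V)$. Since $\alpha$ has finite order $d$, the operator $F$ satisfies $F^d = \mathrm{id}_V$ and is therefore diagonalizable, so $V$ splits into eigenspaces $V = \bigoplus_\zeta V_\zeta$, the sum running over the eigenvalues $\zeta$ of $F$ (all $d$-th roots of unity). Grouping the eigenspaces by the order of the eigenvalue, I would set $V(d') := \bigoplus_{\mathrm{ord}(\zeta) = d'} V_\zeta$. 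By the very definition of $m$ as the number of eigenvalues of $\alpha$ of order $d'$, counted with multiplicity, we have $\dim_\C V(d') = m$.

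Recall next that an $\alpha$-stable abelian subvariety $X \subseteq A$ corresponds to its tangent space $V_X := T_0 X$, an $F$-invariant complex subspace of $V$, and that $X = V_X/(\Lambda \cap V_X)$; in particular $X$ is completely determined by the subspace $V_X \subseteq V$. The key step is then to show that $V_X = V(d')$ for any $X$ meeting the hypotheses. Since $V_X$ is $F$-invariant and $F$ is diagonalizable, the restriction $F|_{V_X}$ is again diagonalizable and $V_X = \bigoplus_\zeta (V_X \cap V_\zeta)$. The eigenvalues of $F|_{V_X}$ are exactly those $\zeta$ with $V_X \cap V_\zeta \neq 0$; by hypothesis every such $\zeta$ has order $d'$, so $V_X \cap V_\zeta = 0$ whenever $\mathrm{ord}(\zeta) \neq d'$. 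Hence $V_X \subseteq V(d')$. As $\dim_\C V_X = \dim X = m = \dim_\C V(d')$, the inclusion is forced to be an equality, $V_X = V(d')$.

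Applying the identical argument to $Y$ yields $V_Y = V(d')$, whence $V_X = V_Y$. Because an abelian subvariety is recovered from its tangent space inside $V$, this gives $X = Y$, as claimed.

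I expect the only points needing care to be the two standard reductions, which I would state cleanly at the outset: that a finite-order analytic representation is diagonalizable, so that $F$-invariant subspaces are the direct sums of their intersections with the eigenspaces; and that an $\alpha$-stable abelian subvariety is uniquely determined by its $F$-invariant tangent space. Beyond these, everything is a single dimension count, so there is no genuine obstacle. The content of the lemma is precisely that the hypotheses pin $V_X$ down to be the full order-$d'$ part $V(d')$ of $V$, leaving no freedom for $X$ and $Y$ to differ.
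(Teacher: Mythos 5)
Your proof is correct, but it runs in the opposite direction from the paper's. The paper argues by contradiction entirely at the level of abelian subvarieties: if $X \neq Y$, then $Z = X + Y$ is an $\alpha$-stable abelian subvariety of dimension strictly greater than $m$ on which all eigenvalues of $\alpha$ still have order $d'$, contradicting the assumption that $\alpha$ has exactly $m$ such eigenvalues on $A$; no tangent spaces or eigenspace decompositions are ever mentioned. You instead argue directly: diagonalizability of $\rho_a(\alpha)$ forces $T_0X \subseteq V(d')$, and the dimension count $\dim T_0X = m = \dim V(d')$ pins down $T_0X = V(d')$, likewise for $Y$, so $X = Y$ since an abelian subvariety is determined by its tangent space. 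Both proofs are dimension counts resting on the exactness of $m$, but yours buys two things: it proves the slightly stronger statement that any such subvariety has tangent space exactly the full order-$d'$ eigenspace sum (so there is at most one candidate, explicitly identified), and it supplies the linear-algebra justification that the paper's proof actually leaves implicit --- namely, why all eigenvalues of $\alpha|_{X+Y}$ have order $d'$, which is precisely the fact that $F$-invariant subspaces decompose along the eigenspaces of the diagonalizable operator $F$. The paper's version, in turn, is shorter and never needs the correspondence between subvarieties and lattice-compatible subspaces of $T_0A$.
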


\begin{proof}
Suppose $X \neq Y$. Let $Z$ denote the abelian subvariety generated by $X$ and $Y$. Then  $Z = X + Y$ is $G$-stable,
of dimension greater than $m$,
and all eigenvalues of $\alpha|Z$ are of order $d'$. This contradicts the fact that $m$ is the exact number of eigenvalues of $\alpha$ of that order.
\end{proof}

\begin{proof}[Proof of the theorem]
By definition of $B_{d_i}$, all eigenvalues of $\alpha|B_{d_i}$ are of
order $d_i$ and these are exactly all eigenvalues of order
$d_i$ of $\alpha$ on $A$ with multiplicities.
According to Lemma \ref{lem3.2} it suffices to show that
exactly one of the subvarieties $A_{W_j}$ has the same
properties.

Recall that the components $A_{W_j}$ correspond to the
rational irreducible representations $W_j$ of $G$; observe that for a cyclic group $G$ of order $d$ the number $r$ of rational irreducible representations of $G$ is equal to the number of divisors of $d$. This may be seen as follows: Let $a_1, \cdots ,a_r$ be all integer divisors of
$d$ in some order. For example, if
$d = p_1^{n_1}\cdots p_q^{n_q}$ is the prime decomposition of
$d$, we can choose
$$
a_1 = 1,\, a_2 = p_1,\, a_3 = p_1^2, \cdots, a_r = d.
$$
Clearly every $d$-th root of unity is a primitive $a_j$-th
root of unity for exactly one $a_j$. Then the rational
irreducible representation $W_j$ of $G$ is given as
(or, more precisely, is complex conjugate to) the direct sum of all characters of order $a_j$.

This implies that the eigenvalues of $\alpha|A_{W_j}$ are exactly all
eigenvalues of order $a_j$ of $\alpha$ on $A$.
So choose $j_i$ such that $d_i = a_{j_i}$. Then Lemma
\ref{lem3.2} implies $B_{d_i} = A_{W_{j_i}}$. The other assertions of the theorem are immediate consequences of this.
\end{proof}

\section{Preliminaries}

\subsection{Idempotents associated to a subgroup}

Let $G$ be a finite abelian group acting on an abelian variety $A$ and
$\rho: \QQ[G] \ra \End_\QQ(A)$ the corresponding homomorphism. To every
subgroup $H$ of $G$ one associates as usual the idempotent $p_H:= \frac{1}{|H|}\sum_{h \in H} h$
and we define the associated abelian subvariety of $A$ by
$A^H := \Ima(p_H).$
It is easy to see that $A^H$ is the maximal abelian subvariety of $A$ on
which $H$ acts trivially.

If $H \subset N \subset G$ are two subgroups of $G$, it follows from
$$
p_N = p_H p_N = p_N p_H
$$
that $A^N \subset A^H$ and that $q:= p_H -p_N$
is an idempotent of $\QQ[G]$. Define
$$
P(A^H/A^N) := \Ima(q).
$$
Since $p_H = p_N + q$, the addition map gives an isogeny
$$
A^N \times P(A^H/A^N) \ra A^H.
$$
Therefore $P(A^H/A^N)$ is called the {\it complementary abelian subvariety} of $A^N$ in $A^H$. Note that it is uniquely determined by the subgroups
$H \subset N$ and in particular is independent of a polarization of $A$.

\subsection{The irreducible rational representations of a finite abelian group}

Now let $G$ be an arbitrary finite abelian group. For any complex irreducible character
$\chi$ of $G$ let
$L_\chi:= \QQ(\chi(g): g \in G)$
denote its field of definition, which is a Galois extension of $\QQ$. For
any $\tau$ in the Galois group $Gal(L_\chi/\QQ)$, the character $\chi^\tau$, defined by
$$
 \chi^\tau(g) := \tau(\chi(g)) \qquad \mbox{for any} \; g \in G,
$$
is an irreducible character of $G$, different from $\chi$ if $\tau$ is not the identity. Then
$$
W:= \bigoplus_{\tau \in Gal(L_\chi/\QQ)} \chi^\tau
$$
is an irreducible rational representation of $G$.
Conversely, every irreducible rational representation arises in this way. We say in this case that each $\chi^\tau$ and $W$ are {\it Galois-associated}.

For any complex irreducible character $\chi$ of $G$ we associate the following subgroup of $G$,
$$
K_\chi:= \Ker(\chi) \subset G.
$$
The following lemma and its corollary are well known. For the convenience of the reader we include the easy
proof.
\begin{lem}
Let $\chi_1$ and $\chi_2$ be two irreducible characters of $G$. Then the
following conditions are equivalent,
\begin{enumerate}
\item $\chi_1$ and $\chi_2$ are Galois-associated to the same irreducible rational representation;
\item $K_{\chi_1} = K_{\chi_2}$.
\end{enumerate}
\end{lem}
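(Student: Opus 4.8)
The lemma asserts the equivalence of two conditions on irreducible complex characters $\chi_1, \chi_2$ of a finite abelian group $G$: being Galois-associated to the same irreducible rational representation, and having equal kernels $K_{\chi_1} = K_{\chi_2}$. Let me think about how to prove each direction.

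The plan is to exploit the duality between $G$ and its character group $\widehat{G}$, together with the fact that for a finite abelian group every irreducible complex character is one-dimensional, so $\chi(g)$ is always a root of unity and $K_\chi = \ker(\chi)$ is an honest subgroup.

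Let me think about the forward direction (1) $\Rightarrow$ (2) first. If $\chi_2 = \chi_1^\tau$ for some $\tau \in \Gal(L_{\chi_1}/\QQ)$, then $\chi_2(g) = \tau(\chi_1(g))$ for all $g$. Since $\tau$ is a field automorphism fixing $\QQ$, it sends $1$ to $1$ and sends any element other than $1$ to an element other than $1$ — because $\tau$ is injective and $\tau(1) = 1$. Hence $\chi_1(g) = 1$ if and only if $\tau(\chi_1(g)) = \chi_2(g) = 1$, which gives $K_{\chi_1} = K_{\chi_2}$ directly.

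For the reverse direction (2) $\Rightarrow$ (1), suppose $K_{\chi_1} = K_{\chi_2} =: K$. Each $\chi_i$ factors through the quotient $G/K$, inducing a faithful character $\bar\chi_i$ of the cyclic group $G/K$ (faithful since $K$ is exactly the kernel; and $G/K$ is cyclic because it embeds as a finite subgroup of $\C^\times$, the image of $\chi_i$). Let $n := |G/K|$, the common order of $\chi_1$ and $\chi_2$. Then both $\bar\chi_1$ and $\bar\chi_2$ are faithful characters of the cyclic group $G/K$ of order $n$, so each sends a fixed generator to a primitive $n$-th root of unity, i.e. $\bar\chi_2 = \bar\chi_1^{\,k}$ for some $k$ coprime to $n$. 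I would then observe that raising to the $k$-th power, with $\gcd(k,n)=1$, realizes precisely a Galois automorphism of $\QQ(\zeta_n) = L_{\chi_1}$: the map $\zeta_n \mapsto \zeta_n^k$ is an element $\tau \in \Gal(\QQ(\zeta_n)/\QQ)$, and $\chi_2 = \chi_1^\tau$. Thus $\chi_1, \chi_2$ are Galois-associated, and by the construction of $W$ in the excerpt they sit in the same irreducible rational representation.

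**Where the work concentrates.** The forward direction is essentially immediate from injectivity of $\tau$. The real content is the reverse direction, and the key step I expect to be the crux is identifying the Galois group $\Gal(L_{\chi}/\QQ)$ with $(\ZZ/n\ZZ)^\times$ via the cyclotomic correspondence $\tau \leftrightarrow (\zeta_n \mapsto \zeta_n^k)$, and checking that the power map $\bar\chi_1 \mapsto \bar\chi_1^{\,k}$ on characters coincides with the action $\chi_1 \mapsto \chi_1^\tau$. Once this dictionary is in place, the statement that any two faithful characters of the cyclic group $G/K$ differ by such a power is elementary. I would take care that $L_{\chi_1} = L_{\chi_2} = \QQ(\zeta_n)$ (both equal to the $n$-th cyclotomic field since the image of each $\chi_i$ is exactly the group $\mu_n$ of $n$-th roots of unity), so that the two characters genuinely live over the same field and the Galois action is comparing like with like.
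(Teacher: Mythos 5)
Your proof is correct. The forward direction is identical to the paper's (injectivity of the Galois automorphism $\tau$ forces equal kernels), and for the converse you and the paper make the same opening move: reduce to the cyclic quotient $G/K$ and observe that a generator is sent to a primitive $n$-th root of unity, $n = |G/K|$. Where you genuinely diverge is in how the argument is closed. The paper stays at the level of rational representations: it quotes the fact that a cyclic group has a \emph{unique} faithful irreducible rational representation, so $W_1$ and $W_2$, both inducing that representation on $G/K$, must coincide. You instead stay at the level of complex characters and exhibit the Galois conjugacy explicitly: writing $\bar\chi_2 = \bar\chi_1^{\,k}$ with $\gcd(k,n)=1$, noting $L_{\chi_1} = L_{\chi_2} = \QQ(\zeta_n)$ because each image is exactly $\mu_n$, and realizing the $k$-th power map as the automorphism $\zeta_n \mapsto \zeta_n^k$ under $\Gal(\QQ(\zeta_n)/\QQ) \cong (\ZZ/n\ZZ)^\times$, so that $\chi_2 = \chi_1^\tau$. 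The trade-off: the paper's version is shorter because it leans on a standard representation-theoretic fact stated without proof; yours is more self-contained, since your cyclotomic computation is in effect a proof of that very uniqueness fact, and it yields the slightly more explicit conclusion that $\chi_1$ and $\chi_2$ are literally Galois conjugate rather than only that their associated rational representations agree.
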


\begin{proof}
(1) $\Rightarrow$ (2): By what we have said above, condition (1) means that
$\chi_2 = \tau \circ \chi_1$. Since $\tau$ is an automorphism of
$L_{\chi_1} = L_{\chi_2}$, this implies $K_{\chi_2} = K_{\chi_1}$.

(2) $\Rightarrow$ (1): Note first that, since
$\chi_j$ is a non-trivial homomorphism from $G$ to $S^1 \subset \mathbb{C}^{\ast}$, the
quotient group
$ \; G/K_{\chi_j} \cong \Ima(\chi_j) \; $ is a finite cyclic  subgroup of $S^1$.
For $j = 1, 2$ choose $ x_j \in G$ such that $ \; G/K_{\chi_j} = \langle x_jK_{\chi_j}\rangle \; $.
 Then $ \; \chi_j(x_j) \; $ is a primitive root of unity of order $|G/K_{\chi_j}|$.
So, if
 $ \; W_j \; $ is the rational irreducible representation Galois-associated to $ \; \chi_j, \; $ then $ \; W_j \; $ induces the unique faithful irreducible rational representation of $ \; G/K_{\chi_j}. \; $

 Suppose $ \; K_{\chi_1} = K_{\chi_2}. \; $ Then $ \; W_1 \; $ and $ \; W_2
 \; $ induce the same faithful irreducible rational representations of $ \;
 G/K_{\chi_1}$. But this implies that $ \; W_1 = W_2$ and thus the assertion.
\end{proof}

As an immediate consequence we get,

\begin{cor} \label{c5.2}
With the notation of above there are canonical bijections between the following sets:
\begin{enumerate}
\item classes of Galois-associated complex irreducible characters;
\item irreducible rational representations;
\item subgroups $K$ of $G$ whose quotient is cyclic.
\end{enumerate}
\end{cor}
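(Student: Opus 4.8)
The plan is to read off all three bijections from the kernel map $\chi \mapsto K_\chi$ together with the discussion preceding the Lemma. The bijection between (1) and (2) is essentially definitional: the text has already recorded that every irreducible rational representation $W$ is obtained as $\bigoplus_{\tau} \chi^\tau$ for a complex irreducible character $\chi$, and that conversely every such $\chi$ is Galois-associated to exactly one $W$. Thus the assignment sending a class of Galois-associated characters to the rational representation it spans, and the assignment sending $W$ back to its class of Galois conjugates, are mutually inverse. So the only real content lies in the bijection between (1) and (3).

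For (1)$\leftrightarrow$(3), I would consider the map that assigns to a class of Galois-associated characters the common kernel $K_\chi$ of its members. This is well defined and injective by the Lemma, which states precisely that two characters have the same kernel if and only if they lie in the same Galois-associated class. It therefore remains to identify the image of this map with the set of subgroups $K \subset G$ for which $G/K$ is cyclic.

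The containment of the image in (3) is immediate: for any irreducible character $\chi$ of the abelian group $G$ one has $G/K_\chi \cong \Ima(\chi)$, which is a finite subgroup of $S^1$ and hence cyclic. For the reverse containment --- which I expect to be the only step requiring an argument, though a short one --- I would start from a subgroup $K$ with $G/K$ cyclic of order $n$, choose a generator $\overline{x}$ of $G/K$, and define a character of $G/K$ by sending $\overline{x}$ to a primitive $n$-th root of unity. Composing with the projection $G \to G/K$ produces an irreducible character $\chi$ of $G$ whose kernel is exactly $K$. Hence every subgroup with cyclic quotient occurs as some $K_\chi$, the kernel map is a bijection onto (3), and combining this with the bijection (1)$\leftrightarrow$(2) yields the corollary.
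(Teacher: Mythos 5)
Your proposal is correct and matches the route the paper intends: the paper states the corollary as an immediate consequence of the preceding Lemma, and your argument supplies exactly those details --- the bijection (1)$\leftrightarrow$(2) from the definition of Galois-association, injectivity of $\chi \mapsto K_\chi$ on classes from the Lemma, cyclicity of $G/K_\chi \cong \Ima(\chi)$ as in the Lemma's proof, and surjectivity onto (3) by pulling back a faithful character of the cyclic quotient. No gaps; this is the same approach, merely written out in full.
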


\section{The general case}

We consider Roan's theorem as a method to compute the isotypical
decomposition of an abelian variety with a $G$-action. For this it suffices to compute the isotypical component $A_W$
for every irreducible rational
representation $W$ of $G$.

So let $A$ be an
abelian variety with an action of an arbitrary finite abelian group
and let $W$ be an irreducible rational representation of $G$. The following theorem gives a method to compute
$A_W$. As we noted in the introduction, this generalizes Roan's theorem in a sense.

If $K \neq G$ denotes the subgroup associated to $W$ according to Corollary \ref{c5.2} consider the subfamily $\cP_K$
of subgroups $H$ of $G$ properly containing  $K$ and minimal with this property, i.e.
$$
\cP_K =\{ H \subset G : K \subset H \textup{ and } [H:K] \textup{ is prime } \}.
$$

Note that the number of elements of the set $\cP_K$ equals the number of prime divisors of $[G:H]$.
Then the isotypical component $A_W$ of $A$ corresponding to $W$ can be computed as follows.

\begin{thm}  \label{thm}
$$
A_W = \left\{ \begin{array}{ll}
              \bigcap_{H \in \cP_K} P(A^{K}/A^H) & if \quad K \neq G,\\
              \\
               A^G & otherwise.
               \end{array}  \right.
$$
\end{thm}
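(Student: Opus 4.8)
The plan is to reduce the geometric statement to an identity between idempotents of the commutative semisimple algebra $\QQ[G]$, and then to verify that identity by a short combinatorial argument. I would write $q_H := p_K - p_H$ for $H \in \cP_K$, so that $P(A^K/A^H) = \Ima(q_H)$ by definition, and let $e_W \in \QQ[G]$ be the central idempotent with $A_W = \Ima(e_W)$. Since $G$ is abelian, $\QQ[G]$ is a product of fields whose primitive idempotents are exactly the $e_{W'}$ attached to the irreducible rational representations $W'$; all the idempotents $p_H$, $q_H$, $e_{W'}$ are therefore central and pairwise commuting. Passing to $V := H_1(A,\QQ)$, which is a $\QQ[G]$-module, the image of an idempotent corresponds to the submodule it cuts out, and the identity component of an intersection of abelian subvarieties corresponds to the intersection of the associated submodules. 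Because in a commutative semisimple algebra the intersection $\bigcap_i f_i V$ of the images of commuting idempotents equals $\left(\prod_i f_i\right) V$, this would give
\[
\bigcap_{H \in \cP_K} P(A^K/A^H) = \Ima\Big(\textstyle\prod_{H\in\cP_K} q_H\Big)
\]
(as identity components). Thus the theorem reduces, in the case $K \neq G$, to the algebraic identity $\prod_{H \in \cP_K} q_H = e_W$.

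To prove this identity I would expand each factor in the basis of primitive idempotents. For any subgroup $H$ the projector onto the $H$-invariants satisfies $p_H = \sum_{W': H \subseteq K_{W'}} e_{W'}$, where $K_{W'}$ is the subgroup attached to $W'$ by Corollary \ref{c5.2}, since $e_{W'}$ survives $p_H$ precisely when $H$ acts trivially on $W'$. As $K \subseteq H$, subtracting gives $q_H = \sum_{W': K \subseteq K_{W'},\, H \not\subseteq K_{W'}} e_{W'}$, and multiplying these orthogonal expansions over all $H \in \cP_K$ yields $\prod_{H} q_H = \sum_{W' \in T} e_{W'}$, where
\[
T = \{\, W' : K \subseteq K_{W'} \text{ and } H \not\subseteq K_{W'} \text{ for every } H \in \cP_K \,\}.
\]
It then remains to show that $T = \{W\}$.

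The containment $W \in T$ is immediate: $K_W = K$, so $K \subseteq K_W$, and each $H \in \cP_K$ properly contains $K = K_W$, whence $H \not\subseteq K_W$. For the converse, suppose $W' \in T$ with $K \subsetneq K_{W'}$; then $K_{W'}/K$ is a nontrivial subgroup of the cyclic group $G/K$ and hence contains a subgroup of prime order, whose preimage $H$ in $G$ satisfies $K \subseteq H \subseteq K_{W'}$ with $[H:K]$ prime. Thus $H \in \cP_K$ and $H \subseteq K_{W'}$, contradicting $W' \in T$. Hence every $W' \in T$ has $K_{W'} = K$, and by the bijection of Corollary \ref{c5.2} exactly one representation has this property, namely $W$. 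This establishes $\prod_{H\in\cP_K} q_H = e_W$ and so the first case. For $K = G$ the representation $W$ is the trivial one, $e_W = p_G$, and $A_W = \Ima(p_G) = A^G$, giving the second case.

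The main obstacle I anticipate is the passage from the intersection of the geometric subvarieties $P(A^K/A^H)$ to the product of the corresponding idempotents: one must make sure that images and intersections of abelian subvarieties defined by commuting idempotents are faithfully read off from the rational homology, and that it is the identity components that are being compared. Once this dictionary is in place, the heart of the matter is the elementary group-theoretic step showing that the factors $q_H$ retain exactly the representations with kernel precisely $K$; the only representations that could spuriously survive are those with $K \subsetneq K_{W'}$, and these are eliminated by exhibiting a prime-index subgroup between $K$ and $K_{W'}$, which exists because the nontrivial cyclic group $K_{W'}/K$ has an element of prime order.
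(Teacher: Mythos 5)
Your proposal is correct, and its overall skeleton matches the paper's: determine which irreducible representations occur in each complementary subvariety $P(A^K/A^H)$ and show that $W$ is the only one surviving in all of them. The execution, however, differs in two genuine ways, both to your credit. For the combinatorial core, the paper argues directly with complex characters: writing $\rho_H$ for the representation induced from the trivial representation of $H$, it decomposes a generator $x$ of the cyclic group $G/K$ into prime-power parts $x = x_1\cdots x_r$ and shows that any irreducible $\theta$ with $\langle \theta, \rho_K - \rho_{H_j}\rangle_G = 1$ for every $j$ must have $\theta(x_j)$ a primitive $p_j^{\alpha_j}$-th root of unity, forcing $\Ker(\theta)=K$. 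Your argument is the contrapositive and is more elementary: if $K \subsetneq K_{W'}$, then $K_{W'}/K$ contains a prime-order subgroup whose preimage $H$ lies in $\cP_K$ and inside $K_{W'}$, so $e_{W'}$ is annihilated by the factor $q_H$; this dispenses with the root-of-unity bookkeeping and uses only Corollary \ref{c5.2}. Second, and more importantly, you make explicit the step the paper leaves implicit: its proof ends abruptly with ``which implies $A_W = \bigcap_{H} P(A^K/A^H)$,'' giving no justification for passing from a statement about representations to one about subvarieties. You supply exactly the missing dictionary --- images and intersections of abelian subvarieties are faithfully encoded in $H_1(A,\QQ)$, and for commuting idempotents $f_i$ one has $\bigcap_i f_i V = \bigl(\prod_i f_i\bigr)V$ --- so that the theorem reduces cleanly to the idempotent identity $\prod_{H \in \cP_K} q_H = e_W$. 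One caveat, which you flag and the paper does not: the set-theoretic intersection of the $P(A^K/A^H)$ could a priori have finitely many components, so the equality should be read on identity components; this imprecision is already present in the paper's statement, and your treatment of it is the more careful one.
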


\begin{proof} We may assume that $K \neq G$, the other assertion being trivial. Let $ \; \chi \in \Irr_{\mathbb C}(G) \; $ be
Galois-associated to $W$. So $K = \Ker(\chi)$.

Choose  $ \; x \in G \; $ such that $ \; G/K = \langle xK\rangle = \langle x \rangle K/K \; $ with $ \; x^n \in K, \; $
where $ \;  n = \displaystyle\vert G/K\vert.$
Consider the prime factorization $ \; n = p_1^{\alpha_1}p_2^{\alpha_2}....p_r^{\alpha_r} \; $. Then $x$ decomposes uniquely as
$$ \;
x = x_1x_2...x_r \qquad \mbox{with}  \qquad x_j^{p_j^{\alpha_j}} \in K.
$$
Then $ \; \chi(x_j) \; $ is a $ \; p_j^{\alpha_j}$-th  primitive root of unity, for all $ \; j  = 1, 2, .., r.\; $

Consider a complex irreducible representation $\theta$ of $G$ such that $\langle \rho_{K} , \theta \rangle_G =1$.
Here $\rho_K$ denotes the representation of $G$, induced by the trivial representation of the subgroup $K$. Similarly $\rho_H$
is defined. Moreover, $\langle .,. \rangle_G$ denotes the usual scalar product on the space of complex characters of $G$.

We claim that if $\langle \rho_{H} , \theta \rangle_G =0$ for all $H$ in $\cP_K$, then $\theta$ is
Galois-associated to $\chi$.

To see this, choose $H_j$ in $\cP_K$ such that $[H_j : K] = p_j$. Now by
assumption we have
$$
\langle \theta \: , \: \rho_{K}
 - \rho_{H_j} \rangle_G = 1.
$$
 This implies that $ \; \theta(x_j) \; $  is a $ \; p_j^{\alpha_j}$-th  primitive root of unity. Hence,
if $ \; \langle \theta\: , \:  \rho_{K_W} - \rho_{H} \rangle_G = 1 \; $ for all $H$ in $\cP_K$,  then
$ \; \theta(x_j) \; $  is a $ \; p_j^{\alpha_j}$-th
primitive root of unity for all $ \; j . \; $ Therefore, $ \; K = \ker(\theta) \; $ and $ \; \theta \; $
is Galois-associated  to $ \; \chi. \; $

In this way $ \; W \; $  is the only rational irreducible representation common to all $ \;  \rho_{K} - \rho_{H_j},  \; j = 1,2,...,r \; $,
which implies
$$
A_W = \bigcap_{H \in \cP_K} P(K/H).
$$
\end{proof}

\begin{ex} To give an example for how Theorem \ref{thm} works, consider the group
$$
G = \langle a \rangle \times \langle b \rangle \simeq \ZZ/p^3 \ZZ \times \ZZ/q^2\ZZ
$$
with primes $p$ and $q$, which may be equal or not, the method is the same.
Note that if $p \neq q$, the group $G$ is cyclic, so the method of Section 2.2 may be applied
directly, but Theorem \ref{thm} gives a bit more.

The complex irreducible characters of $G$ are given by
$$
\chi_{(j,k)}(ab) = \omega_{p^3}^j \omega_{q^2}^k \qquad \mbox{for}\qquad 0 \le j \le p^3 -1, \;
0 \le k \le q^2 -1,
$$
where $\omega_{p^3}$ and $\omega_{q^2}$ are primitive $p^3$-rd respectively $q^2$-nd roots of unity.
We consider the character $\chi_{(p^2,q)}$, let $K$ denote its kernel and let $W$ the irreducible rational character Galois-associated to it.
Let $A$ be an abelian variety with an action of $G$. For the computation of the isotypical component $A_W$ we have to distinguish
two cases.

{\bf (a)}: $p \neq q$.
Then the kernel of $\chi_{(p^2,q)}$ is $K = \langle a^pb^q \rangle$, which is properly contained
in $H_1 = \langle ab^q \rangle$ of index $p$ and in $H_2 = \langle a^pb \rangle$ of index $q$
and in no other subgroup of prime index.
So Theorem \ref{thm} gives
$$
A_W = P(A^K/A^{H_1}) \cap P(A^{K}/A^{H_2})
$$
which is a bit more than we get by Roan's method.

Furthermore, $H_1 = \Ker(\chi_{(0,q)})$ and $H_2 = \Ker(\chi_{(p^2,0)})$. So, if $W_1$ and $W_2$ denote the
irreducible rational representations Galois-associated to $\chi_{(0,q)}$ and $\chi_{(p^2,0)}$ respectively,
we obtain
$$
\rho_K - \rho_{H_1} = W \oplus W_2 \quad \mbox{and} \quad \rho_K - \rho_{H_2} = W \oplus W_1.
$$
Hence $G$ acts on $P(A^K/A^{H_1})$ by $W \oplus W_2$ and on $P(A^K/A^{H_2})$ by $W \oplus W_1$.
So $G$ acts on $A_W$ by the representation $W$ as it should.

{\bf (b)}: $p = q$:
In this case the kernel $K$ of $\chi_{(p^2,p)}$ is not cyclic. In fact, $K = \langle ab^{-1} \rangle \times \langle b^p \rangle$, which is of index $p$
in $G$. Hence Theorem \ref{thm} gives
$$
A_W = P(A^{K}/A^G)
$$
and clearly $G$ acts on $A_W$ by the representation $W$.
\end{ex}

{ \bf Acknowledgement:} We would like to thank Gabriele Nebe for pointing out a mistake in the first version
of the paper.

\bibliographystyle{amsplain}

\end{document}